\newtheorem{dfn}{Definition}[section]
\newtheorem{thm}[dfn]{Theorem}
\newtheorem{prp}[dfn]{Proposition}
\newtheorem{rmk}[dfn]{Remark}
\newtheorem{lem}[dfn]{Lemma}
\newtheorem{cor}[dfn]{Corollary}
\newtheorem*{qst}{Question}
\numberwithin{equation}{section}
\newcommand{\Z}{{\mathbb Z}}
\newcommand{\R}{{\mathbb R}}
\newcommand{\C}{{\mathbb C}}
\newcommand{\W}{{\mathcal W}}
\begin{document}

\title{Magnetic Katok examples on the two-sphere}

\author{Gabriele Benedetti}
\address{WWU M\"unster, Mathematisches Institut, Einsteinstra\ss e 62, 48149 M\"unster, Germany}
\email{\href{mailto:benedett@uni-muenster.de}{benedett@uni-muenster.de}}
\subjclass[2010]{37J45, 53D25}
\keywords{Magnetic flow, periodic orbits, Katok examples, symplectic geometry}
\date{\today}

\begin{abstract}
We show that there exist non-exact magnetic flows on the two-sphere having an energy level whose double cover is strictly contactomorphic to an irrational ellipsoid in $\C^2$. Our construction generalises the examples of integrable Finsler metrics on the two-sphere with only two closed geodesics due to A.~Katok. 
\end{abstract}

\maketitle

\section{Introduction}
Let $\sigma$ be a two-form on $S^2$. We endow $T^*S^2$ with the symplectic form \begin{equation*}
\omega_{\sigma}\,:=\ d\lambda\ -\ \pi^*\sigma\,,
\end{equation*}
where $\lambda$ is the canonical $1$-form on $T^*S^2$ and $\pi$ is the bundle projection $\pi:T^*S^2\rightarrow S^2$. If $H:U\rightarrow\R$ is a smooth function on an open subset $U$ of $T^*S^2$, we define its Hamiltonian vector field $X_{H,\sigma}$ by
\begin{equation*}
\imath_{X_{H,\sigma}}\omega_\sigma\ =\ -dH\,.
\end{equation*}
We denote by $\Phi^{H,\sigma}$ the flow generated by $X_{H,\sigma}$.

If $g$ is a Riemannian metric on $S^2$, a natural choice for a Hamiltonian is the kinetic energy function $H_g:T^*S^2\rightarrow\R$. It is defined as $H_g(q,p):=\frac{1}{2}|p|^2_q$, where $|\cdot|_q$ is the norm induced by $g$ on $T_q^*S^2$. In this case we refer to $\Phi^{g,\sigma}:=\Phi^{H_g,\sigma}$ as the \textit{magnetic flow} of the pair $(g,\sigma)$. The reason for such a terminology is that these dynamical systems bear an interesting physical significance: they describe the motion of a charged particle on $S^2$ under the influence of a stationary magnetic field \cite{Arn61}. In the following discussion, we will also call $\sigma$ a \textit{magnetic form} and the pair $(g,\sigma)$ a \textit{magnetic system}.

A classical problem is to study the existence of periodic orbits for a magnetic flow $\Phi^{g,\sigma}$ on the energy level $H_g^{-1}(k)$, as $k$ varies in $(0,+\infty)$. When the level is of \textit{contact type} in $(T^*S^2,\omega_{\sigma})$ (see \cite[Chapter 4.3]{HZ11}), it is known that the number of periodic orbits is at least two \cite{CGH12,GHHM13}. If the energy level is in addition \textit{dynamically convex} in $(T^*S^2,\omega_\sigma)$, then such an existence result can be refined to yield the following dichotomy: either the number of periodic orbits is exactly $2$ or it is infinite. We refer the reader to \cite{HWZ98,HLS15} for the definition of dynamical convexity and a proof of the dichotomy.

In \cite{Ben16} we proved that low energy levels are dynamically convex provided the magnetic form is symplectic on $S^2$. We recall the precise statement here for the convenience of the reader. Before doing that, we remark that this result can be thought as a generalisation of a theorem by Harris and G.~Paternain asserting that suitably pinched Finsler metrics have dynamically convex geodesic flows \cite{HP08}.
\begin{thm}[\cite{Ben16}]\label{thm:dc}
Let $(g_*,\sigma_*)$ be a magnetic system on $S^2$ such that $\sigma_*$ is symplectic. There exists a neighbourhood $\mathcal U$ of $(g_*,\sigma_*)$ in the $C^1$-topology and a positive real number $c_{\mathop{dc}}(g_*,\sigma_*)$ such that for every $(g,\sigma)\in \mathcal U$ and every $k<c_{\mathop{dc}}(g_*,\sigma_*)$, the hypersurface $H^{-1}_g(k)$ is dynamically convex in $(T^*S^2,\omega_\sigma)$.
\end{thm}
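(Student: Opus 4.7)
The plan is to exploit the fact that, when $\sigma_*$ is symplectic on $S^2$, the vanishing-energy limit of the magnetic flow is dominated by a fast cyclotron rotation whose frequency is bounded uniformly away from zero. Writing $\sigma_*=f_*\mu_{g_*}$ with $f_*$ nowhere vanishing, set $B:=\min_{S^2}|f_*|>0$. At any point $(q,p)\in H_{g_*}^{-1}(k)$, the projected trajectory has geodesic curvature equal to $|f_*(q)|/\sqrt{2k}$; hence every closed orbit of energy $k\ll 1$ is confined to a geodesic disc of radius $O(\sqrt{k}/B)$ and has prime period close to $2\pi/|f_*(q)|$. This structural step reduces dynamical convexity to an index computation along closed orbits that are $C^1$-close to small Larmor circles.

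The core of the argument is then the index calculation. I would choose a contact form $\alpha$ on $H_{g_*}^{-1}(k)$ whose Reeb vector field positively reparametrises $X_{H_{g_*},\sigma_*}$; such an $\alpha$ exists because $H^2(\R P^3;\R)=0$, so that $\omega_{\sigma_*}$ restricted to the level is exact. Along a closed orbit $\gamma$, I would introduce a cyclotron/guiding-centre splitting in a tubular neighbourhood of the zero section and bring the linearised Reeb flow into a normal form: to leading order in $\sqrt{k}$ it is the $2\pi$-rotation corresponding to one full Larmor turn, and the first nontrivial correction is a slow transversal twist generated, at a critical point of the averaged guiding-centre Hamiltonian $H_{gc}\propto 1/|f_*|$, by the Hessian of $H_{gc}$. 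The main step is to show that this Hessian contribution always has the correct sign and is large enough to push the rotation number past $1$, giving $\mu_{\mathrm{CZ}}(\gamma)\ge 3$; the same applies to multiply-covered orbits, since for a simple elliptic orbit of rotation number $\theta>1$, the $n$-th iterate has rotation number $n\theta>n\ge 1$.

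Uniformity in $(g,\sigma)\in\mathcal{U}$ is then the easy part: all the ingredients above, namely the lower bound $B$ on $|f|$, the geodesic-curvature identity, the normal form, and the size of the twist, depend continuously on $(g,\sigma)$ in the $C^1$-topology, so the same estimates (with constants shrunk by a fixed factor) hold on a small $C^1$-neighbourhood $\mathcal{U}$ of $(g_*,\sigma_*)$. The main obstacle is clearly the second paragraph: at each critical point of $H_{gc}$ the leading-order Poincaré return map is a full $2\pi$-rotation and hence degenerate, so everything hinges on identifying the first nontrivial $\sqrt{k}$-correction and verifying that its sign is uniform, of magnitude bounded below, and compatible with dynamical convexity. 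This is exactly where the hypothesis that $\sigma_*$ is symplectic, not merely closed, enters decisively, since it guarantees that $H_{gc}=1/|f_*|$ is smooth on all of $S^2$ and that its critical points are isolated and faithfully captured by the averaged system.
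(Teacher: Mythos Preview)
The paper does not contain a proof of this theorem: it is quoted from the external reference \cite{Ben16} and merely recalled for context, so there is no in-paper argument to compare against. What follows is therefore an assessment of your sketch on its own merits.

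Your outline has the right architecture---low energy forces trajectories to be $C^1$-close to small Larmor circles, a contact form exists because $H^2(\R\mathbb P^3;\R)=0$, and dynamical convexity reduces to showing $\mu_{\mathrm{CZ}}\geq 3$ for every closed Reeb orbit. But the proposal is not a proof: you yourself flag that ``everything hinges on identifying the first nontrivial $\sqrt{k}$-correction and verifying that its sign is uniform,'' and you do not carry this out. Two concrete difficulties remain. First, the Hessian of $H_{gc}\propto 1/|f_*|$ does \emph{not} have a uniform sign at its critical points---it is positive definite at minima, negative definite at maxima, and indefinite at saddles---so the assertion that it ``always has the correct sign'' cannot be true as stated; the actual index bound must come from a different mechanism (in \cite{Ben16} it comes from a direct estimate of the winding of the linearised flow in a global frame of the contact structure, not from the sign of a Hessian). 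Second, your discussion only addresses closed orbits sitting near critical points of $H_{gc}$, whereas dynamical convexity requires the index bound for \emph{every} closed Reeb orbit, including long prime orbits that are not localized near a single critical point; these are not covered by the guiding-centre picture you describe.

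In short, the strategic shape is reasonable, but the decisive analytic step is missing and one of its stated premises is false.
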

It is interesting to understand better the dichotomy between two and infinitely many periodic orbits in the energy regime given by the theorem above. For example, we can pose the following question, whose answer is still open.
\begin{qst}
Given a magnetic system $(g_*,\sigma_*)$ with $\sigma_*$ symplectic, does there exists a positive number $c_\infty(g_*,\sigma_*)$ such that, for every $k<c_\infty(g_*,\sigma_*)$, $\Phi^{g_*,\sigma_*}$ has infinitely many periodic orbits with energy $k$?
\end{qst}

Results of Franks and Handel \cite{FH03} and of Ginzburg, G\"urel and Macarini \cite{GGM15} show that for symplectic magnetic systems on orientable surfaces of \textit{positive} genus, the answer to this question is positive. Moreover, this holds also on the two-sphere if the magnetic system satisfies some non-resonance condition, as some work in progress by the author will show. In both cases the value of $c_\infty(g_*,\sigma_*)$ can be required to be uniform in a $C^1$-neighbourhood of $(g_*,\sigma_*)$.

In this paper we show that under this additional uniformity assumption the answer to the question above is negative. To this purpose let $\bar{g}$ be a metric on $S^2$ with constant curvature $1$ and let $\bar{\mu}$ be the area form associated to $\bar{g}$ and a given orientation on $S^2$. Moreover, for every $\alpha\in (-1,1)$ let \begin{equation*}
E_{\alpha}\,:=\, \left\{\,(z_1,z_2)\in\C^2\ \Big|\ (1+\alpha)\frac{|z_1|^2}{2}+(1-\alpha)\frac{|z_2|^2}{2}=1\,\right\}\ \subset\ \big(\C^2,d\lambda_{\C^2}\big) 
\end{equation*}
be the standard ellipsoid, where $\lambda_{\C^2}$ is the standard Liouville form in $\C^2$. It is a classical fact that $E_\alpha$ is a dynamically convex hypersurface and, when $\alpha$ is irrational, it carries only two periodic orbits \cite{HWZ98}.

\begin{thm}\label{thm:main}
For all $s\geq 0$, there exist sequences of Riemannian metrics $(g_n)$, $1$-forms $(\beta_n)$ on $S^2$ and positive numbers $(k_n)$ (depending on $s$) with the property that
\begin{equation}
g_n\ \xrightarrow{\, C^\infty\, }\ \bar{g}\,,\quad \beta_n\ \xrightarrow{\, C^\infty\, }\ 0\,,\quad k_n\ \longrightarrow\ 0
\end{equation}
and such that the double cover of the level $H^{-1}_{g_n}(k_n)\subset (T^*S^2,\omega_{s\bar{\mu}+d\beta_n})$ is strictly contactomorphic to $E_{\alpha_n}$, where $(\alpha_n)\subset(0,1)$ is a sequence of irrational numbers converging to $0$. In particular, $\Phi^{g_n,s\bar{\mu}+d\beta_n}$ has exactly two periodic orbits with energy $k_n$.
\end{thm}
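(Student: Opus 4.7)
The construction is a magnetic analogue of Katok's Finsler deformation of the round sphere.

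\emph{Symmetric model.}
Let $K$ be a Killing vector field on $(S^2,\bar g)$ generating a subgroup $SO(2)\subset SO(3)$, write $K^\flat:=\bar g(K,\cdot)$ for its $\bar g$-dual $1$-form, and $J(q,p):=\langle p,K(q)\rangle$ for the induced momentum map on $T^*S^2$. Both $\bar g$ and $\bar\mu$ are $SO(3)$-invariant, so $J$ Poisson commutes with $H_{\bar g}$ on $(T^*S^2,\omega_{s\bar\mu})$. Combined with the full $SO(3)$-symmetry this forces, for every sufficiently small $k>0$, every orbit of $\Phi^{\bar g,s\bar\mu}$ on $\Sigma_k:=H_{\bar g}^{-1}(k)$ to be closed with a common period $T(k,s)$, so the flow generates a free $S^1$-action making $\Sigma_k\cong\mathbb{RP}^3$ a principal circle bundle over a $2$-sphere. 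By the standard prequantization dictionary, the double cover $\tilde\Sigma_k\cong S^3$ is then strictly contactomorphic to a round sphere $E_0\subset\C^2$ whose size is determined by $T(k,s)$.

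\emph{Katok perturbation.}
Take $g_n:=\bar g$ and $\beta_n:=\epsilon_n K^\flat$ for a sequence $\epsilon_n\to 0$ to be chosen later. The perturbed form $\sigma_n:=s\bar\mu+d\beta_n=s\bar\mu+\epsilon_n\,dK^\flat$ remains $K$-invariant (because $\mathcal L_K K^\flat=0$), so $J$ is still a first integral of the magnetic flow $\Phi^{\bar g,\sigma_n}$, which is therefore completely integrable on each energy level, and the commuting flows of $X_{H_{\bar g}}$ and $X_J$ define a $T^2$-action. Using the explicit formula $dK^\flat=2h\,\bar\mu$, where $h:S^2\to[-1,1]$ is the height function of $K$, a direct computation shows that exactly two Reeb orbits on $H_{\bar g}^{-1}(k_n)$ remain periodic -- the polar Larmor circles lying over the $K$-fixed points of $S^2$ when $s>0$, or the two oriented equators of $K$ when $s=0$ -- and their periods $T_\pm=T_\pm(k_n,s,\epsilon_n)$ satisfy
\[
\frac{T_+}{T_-}\ =\ \frac{1+\alpha_n}{1-\alpha_n}
\]
for an explicit function $\alpha_n=\alpha(k_n,s,\epsilon_n)$.

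\emph{Identification with the ellipsoid.}
The level $H_{\bar g}^{-1}(k_n)\subset(T^*S^2,\omega_{\sigma_n})$ is thus an integrable toric contact $3$-manifold with exactly two exceptional Reeb orbits. By Lerman's classification of toric contact $3$-manifolds, or by an explicit Moser-type argument in action-angle coordinates, its double cover is strictly contactomorphic to $E_{\alpha_n}$, with $\alpha_n$ determined by the period ratio above. Choosing $\epsilon_n$ (and, in the case $s=0$, also $k_n$) so that $\alpha_n\in(0,1)$ is irrational and $\alpha_n\to 0$ completes the construction: $g_n=\bar g$ trivially, $\beta_n=\epsilon_n K^\flat\to 0$ in $C^\infty$, and the ``exactly two periodic orbits'' dichotomy follows from the dynamical convexity given by Theorem~\ref{thm:dc} when $s>0$, and from the analogous $C^1$-stability of the round-sphere geodesic flow's convexity when $s=0$.

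\emph{Main obstacle.}
The most delicate point is upgrading the abstract identification in Step $3$ into a \emph{strict} contactomorphism: one must match not only the two Reeb periods but the full action spectrum, i.e.\ the image of $(H_{\bar g},J)$ in action coordinates must coincide, after normalization by $T(k_n,s)$, with the moment image of the standard $T^2$-action on $E_{\alpha_n}$. This requires a careful computation of the action integrals of the two exceptional orbits and a verification that the intermediate invariant tori carry the correct quasi-periodic dynamics.
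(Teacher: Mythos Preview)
There is a genuine gap. Your key claim that ``the commuting flows of $X_{H_{\bar g}}$ and $X_J$ define a $T^2$-action'' is false: once the magnetic form is $\sigma_n=s\bar\mu+\epsilon_n\,dK^\flat$ (no longer a constant multiple of $\bar\mu$), the magnetic geodesic flow $X_{H_{\bar g},\sigma_n}$ is not periodic, so you obtain only an $\R\times S^1$-action and Lerman's classification of contact toric $3$-manifolds does not apply. Concretely, after fibre translation by $-\epsilon_n K^\flat$ your level becomes $\{\tfrac12|p+\epsilon_n K^\flat|^2=k_n\}$ inside $(T^*S^2,\omega_{s\bar\mu})$, i.e.\ a level of
\[
\tfrac12(R_s^2-s^2)\ +\ \epsilon_n(\Omega_s-sh)\ +\ \tfrac{\epsilon_n^2}{2}|K|^2\,.
\]
The terms in $h$ and $|K|^2$ are not functions of the action variables $(R_s,\Omega_s)$, and even the remaining toric part $\tfrac12 R_s^2+\epsilon_n\Omega_s$ is \emph{quadratic} in $R_s$, so its Hamiltonian vector field has frequency vector $(R_s,\epsilon_n)$ with $R_s$ varying along the level. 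Hence the rotation number on the Liouville tori is non-constant; by continuity it hits rational values, and each such torus is foliated by closed characteristics. Your level therefore carries infinitely many periodic orbits and cannot be strictly contactomorphic to an irrational ellipsoid. Matching only the two exceptional actions, as you do in your ``main obstacle'' paragraph, is necessary but far from sufficient.

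This is exactly why the paper cannot keep $g_n=\bar g$. One first singles out the Hamiltonian $H_{s,\alpha}=R_s+\alpha\,\Omega_s$, \emph{linear} in the toric generators, so that its flow has \emph{constant} frequency vector $(1,\alpha)$; for irrational $\alpha$ this forces exactly two closed orbits, and the explicit symplectomorphism $\Psi_s$ reduces the identification with $E_\alpha$ to the Harris--Paternain result for $s=0$. The remaining work is purely algebraic: one solves for a metric $g_{s,\alpha,k}$ and a $1$-form $\eta=\alpha r\beta$ such that fibre translation by $\eta$ carries $\{H_{g_{s,\alpha,k}}=k\}$ onto $\{H_{s,\alpha}=c\}$. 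The deformation of the metric away from $\bar g$ is precisely what absorbs the quadratic-versus-linear discrepancy above, and the condition $\alpha_n/k_n\to 0$ then guarantees $g_n\to\bar g$ in $C^\infty$.
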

\begin{rmk}
It would be interesting to generalise Theorem \ref{thm:main} to higher dimensional complex projective spaces. If $\ell$ is a positive integer, we can consider $\C\mathbb P^\ell$ endowed with K\"ahler metric $g_{\C\mathbb P^\ell}$ and K\"ahler form $\omega_{\C\mathbb P^\ell}$ (observe that $g_{\C\mathbb P^1}=\bar{g}$ and $\omega_{\C\mathbb P^1}=\bar{\mu}$). In \cite[Section 7]{GG04} a hypersurface $\Sigma\subset(T^*\C\mathbb P^\ell,d\lambda-s\pi^*\omega_{\C\mathbb P^\ell})$ with the following properties is constructed:
\begin{itemize}
\item $\Sigma$ can be taken arbitrarily $C^\infty$-close to an energy level of $H_{g_{\C\mathbb P^\ell}}$;
\item every Hamiltonian flow which has $\Sigma$ as a regular energy level possesses exactly $\ell(\ell+1)$ periodic orbits on such hypersurface.
\end{itemize}
Then, one could ask if the dynamics on $\Sigma$ can be realised (up to time reparametrisation) by an energy level of some magnetic system on $\C\mathbb P^\ell$. 
\end{rmk}

As mentioned above, we are not able to construct the sequence of energy levels having only two periodic orbits inside a fixed magnetic system, but we need to consider different systems for different values of the energy. The reason for that is that the set of Hamiltonian functions that can be written as the kinetic energy of a Riemannian metric is somewhat too small for our purposes. In view of this observation, we move to consider a more general class of Hamiltonians, which we denote by $\W(S^2)$. We say that a smooth function $H:U_H\to\R$ belongs to $\W(S^2)$ if: i) $U_H\subset T^*S^2$ is a fibrewise star-shaped neighbourhood of the zero section; ii) $H$ is strictly convex in the fibres; iii) the value $0\in\R$ is a global minimum for $H$ and it is achieved exactly at the zero section (namely, $H(q,p)=0$ if and only if $p=0$).

When $H_*\in\W(S^2)$ (or, more generally, $S^2$ is replaced by some closed manifold) and $\sigma_*$ is symplectic, the Hamiltonian flow of $(H_*,\sigma_*)$ was studied by Ginzburg and G\"urel \cite{GG09} in order to generalize the Weinstein--Moser theorem \cite{Wei73,Mos76,Mos78} and obtain the existence of periodic orbits with low energy. In this setting, the relevant observation for us is that the vertical Hessian of $H_*$ at the zero section yields a Riemannian metric $g_*=g(H_*)$ on $S^2$. This means that, up to shrinking $U$, there is some $C>0$ such that
\[
\big\|d_{(q,p)}(H_*-H_{g_*})\big\|\ \leq\ C\,|p|_q^2\,.
\]
This estimate implies that the arguments of \cite{Ben16} still apply to the Hamiltonian flow of $(H_*,\sigma_*)$ and yield a generalisation of Theorem \ref{thm:dc} to this kind of systems. In particular,  $H_*^{-1}(k)$ is dynamically convex in $(T^*S^2,\omega_{\sigma_*})$ if $k$ is small enough. The corresponding version of the question we formulated above now has a neater negative answer.
\begin{thm}\label{thm:main2}
For all $s>0$, there exists a function $H_s\in\W(S^2)$ such that, for $k$ small enough, the double cover of the level $H^{-1}_{s}(k)\subset (T^*S^2,\omega_{s\bar{\mu}})$ is strictly contactomorphic to the ellipsoid $E_{\varepsilon_s k}$, for some $\varepsilon_s>0$. In particular, $\Phi^{H_s,s\bar{\mu}}$ has exactly two periodic orbits whenever $\varepsilon_s k$ is irrational.
\end{thm}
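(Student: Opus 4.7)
The plan is a Katok-type construction that exploits the $SO(3)$-symmetry of $(\bar g,s\bar\mu)$. Pick a Killing vector field $\bar X$ on $(S^2,\bar g)$ generating an $S^1$-subgroup. Since $\imath_{\bar X}\bar\mu$ is closed and $H^1(S^2)=0$, there exists $h\in C^\infty(S^2)$ with $dh=\imath_{\bar X}\bar\mu$, and the lifted action on $T^*S^2$ is Hamiltonian with respect to $\omega_{s\bar\mu}$, with moment map $J_s(q,p)=\langle p,\bar X_q\rangle - sh(q)$. Since rotations are $\bar g$-isometries, $\{H_{\bar g},J_s\}_{\omega_{s\bar\mu}}=0$, so $(H_{\bar g},J_s)$ defines an integrable system near the zero section.

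Next define $H_s:=f(H_{\bar g},J_s)$ for a smooth $f$ with $f(0,\,\cdot\,)\equiv 0$ and $\partial_E f>0$, chosen so that $H_s$ is fibrewise strictly convex on a neighbourhood of the zero section; then $H_s\in\W(S^2)$. Poisson commutativity yields
\[
X_{H_s}\,=\,(\partial_E f)\,X_{H_{\bar g}}\,+\,(\partial_j f)\,X_{J_s},
\]
so the flow preserves the joint level tori $\{H_{\bar g}=E,J_s=j\}$ and acts on each as a rigid translation. Regardless of the choice of $f$, the flow always carries two distinguished closed orbits: the equatorial circles over the extrema of $J_s$ on a fibre of $H_{\bar g}$---geometrically, the two rotation-invariant small circles of the $(\bar g,s\bar\mu)$-magnetic flow, where the $T^2$-action degenerates; all other invariant tori carry only quasi-periodic motion whenever the translation slope is irrational.

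For $k$ small, $H_s^{-1}(k)$ is diffeomorphic to the unit cosphere bundle $\R\mathbb{P}^3$ of $\bar g$, and its connected double cover is $S^3$. Since $H^2(S^3)=0$, $s\pi^*\bar\mu$ is exact on the cover, so one may correct the restriction of $\lambda$ by a $T^2$-invariant primitive to produce a genuine $T^2$-invariant contact form $\tilde\alpha$ on $S^3$ whose Reeb flow covers the magnetic flow; here $T^2$ is generated by the lifts of the magnetic Hopf flow of $H_{\bar g}$ and of the rotation $S^1$. By the classification of toric contact $3$-manifolds (Lerman), $(S^3,\tilde\alpha)$ is strictly $T^2$-equivariantly contactomorphic to a unique standard ellipsoid $E_{\alpha(k)}$, with $\alpha(k)$ read off from the contact moment image---equivalently, from the ratio of the two equatorial Reeb periods $T_\pm(k)$. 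Finally, choose $f$ so that this ratio yields $\alpha(k)=\varepsilon_s k$ for a fixed $\varepsilon_s>0$; a convenient ansatz is $f(E,j)=E(1+c(j-j_0))+O(E^2)$, with $j_0$ the common value of $J_s$ on the two equatorial circles at $k=0$ and $c$ tuned so that $\varepsilon_s>0$.

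The main obstacle will be the \emph{strict} contactomorphism (as opposed to a mere contactomorphism with matching dynamics): the toric classification reduces this to a direct computation of the contact moment image on the cover. A secondary difficulty is that $J_s$ has a linear-in-$p$ part that is nonzero on the zero section, so $f$ must be tuned so that $H_s$ retains its unique minimum $0$ on the zero section and is fibrewise strictly convex on a full neighbourhood.
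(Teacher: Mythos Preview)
Your plan shares the paper's starting point---the commuting pair given by the magnetic kinetic energy and the moment map of the axial rotation---but diverges in how the strict contactomorphism to $E_\alpha$ is obtained. The paper does \emph{not} appeal to Lerman's toric classification. Instead it constructs an explicit symplectomorphism $\Psi_s:\big(\{|p|>s\},d\lambda\big)\to\big(\{|p|>0\},d\lambda-s\pi^*\bar\mu\big)$ with $\Psi_s^*\lambda_s=\lambda$ (Proposition~\ref{prp:sym}), under which $R_s:=\sqrt{|p|^2+s^2}$ and $\Omega_s:=p(\partial_\varphi)+sh$ pull back to $|p|$ and $\Omega_0$. Setting $H_{s,\alpha}=R_s+\alpha\Omega_s$, one has $H_{s,\alpha}\circ\Psi_s=H_{0,\alpha}$, so every level $\{H_{s,\alpha}=c\}$ with $c>s(1+\alpha)$ is \emph{strictly} contactomorphic to the $s=0$ Katok level, which Harris--Paternain already identified with $E_\alpha$. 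The Hamiltonian in $\W(S^2)$ is then given explicitly by
\[
H_s\ =\ \frac{R_s-s}{1+\varepsilon(s-\Omega_s)}\,,
\]
and one checks the algebraic identity $\{H_s=k\}=\{H_{s,\varepsilon k}=s(1+\varepsilon k)+k\}$, which yields $E_{\varepsilon k}$ for every small $k$.

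Compared with this, your proposal is a reasonable outline but has real gaps you would have to close. First, the pair $(H_{\bar g},J_s)$ does not directly generate a $T^2$-action: on $(T^*S^2,\omega_{s\bar\mu})$ the magnetic flow of $H_{\bar g}$ is periodic but with \emph{energy-dependent} period, so on $H_s^{-1}(k)$ the period of $X_{H_{\bar g}}$ varies across the tori; you would need to replace $H_{\bar g}$ by a reparametrisation such as $R_s$ to get an honest toric structure. Second, for a generic $f$ the frequency ratio $(\partial_j f)/(\partial_E f)$ varies along $\{H_s=k\}$, so ``exactly two periodic orbits'' does not follow; you need the ratio to be constant on each level, which is a strong constraint on $f$ that your ansatz does not visibly enforce. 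Third, correcting $\lambda$ by a primitive of $s\pi^*\bar\mu$ on the cover produces a $1$-form with the right exterior derivative, but you must still check it is a contact form with the correct Reeb field; the paper bypasses this because $\Psi_s$ pulls the primitive $\lambda_s$ back to the Liouville form on a starshaped hypersurface. Finally, you yourself flag that the \emph{strict} (not just equivariant) contactomorphism via Lerman's classification is the main obstacle; this requires computing the contact moment image and matching it to that of $E_{\varepsilon_s k}$, which you do not carry out. The paper's symplectomorphism $\Psi_s$ is precisely the device that makes all of these issues evaporate at once.
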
 

Theorem \ref{thm:main} and \ref{thm:main2} yield energy levels whose dynamics is conjugated up to time reparametrisation to the Finsler flow of the celebrated \textit{Katok examples}. This is a class of Finsler metrics with only two closed geodesics and integrable flow. They were introduced by A. Katok in \cite{Kat73} and studied further by Ziller in \cite{Zil83}. The double cover of the sphere bundle of the Katok examples was proven to be strictly contactomorphic to an ellipsoid of the type above by Harris and G. Paternain in \cite[Section 4 and 5]{HP08}. Their result yields the case $s=0$ of the theorem above, since it was also known that such a sphere bundle is contactomorphic to an exact magnetic system on $S^2$ (see \cite{Pat99}). In order to construct the contactomorphism for $s>0$, which is the case of interest in view of the question we asked, we will reduce to the work of Harris and G. Paternain by the means of the following proposition.

\begin{prp}\label{prp:sym}
There exists a family of symplectomorphisms
\begin{equation}
\Psi_s:\Big(\big\{|p|>s\big\},d\lambda\Big)\ \longrightarrow\ \Big(\big\{|p|>0\big\},d\lambda\,-\,s\pi^*\bar{\mu}\Big)\,,\quad s\in[0,+\infty)\,,
\end{equation}
which is equivariant with respect to orientation-preserving isometries of $\bar{g}$.
The map $\Psi_0$ is the identity and, for every $s>0$, $\Psi_s$ admits a continuous extension $\Psi_s:\big\{|p|\geq s\big\}\rightarrow T^*S^2$ sending $\big\{|p|=s\big\}$ onto $\big\{|p|=0\big\}$.
\end{prp}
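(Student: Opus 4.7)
The plan is to construct $\Psi_s$ explicitly, exploiting the $SO(3)$-symmetry of the round sphere. I would identify the unit cotangent bundle $ST^*S^2$ with $SO(3)$ via the orthonormal frame $(q,p)\mapsto(p/|p|,\,q\times p/|p|,\,q)$, so that orientation-preserving isometries of $\bar{g}$ act on $T^*S^2$ by left multiplication. Picking a basis $E_1,E_2,E_3\in\mathfrak{so}(3)$ with cyclic commutators $[E_i,E_{i+1}]=E_{i+2}$ arranged so that the left-invariant vector fields $L_{E_2},L_{E_3}$ coincide with the Reeb field of $\alpha:=\lambda|_{ST^*S^2}$ (the geodesic flow) and with the generator of the fibration $\pi:ST^*S^2\to S^2$ respectively, the dual Maurer--Cartan forms $\omega^1,\omega^2,\omega^3$ satisfy $\alpha=\omega^2$ and, up to orientation, $d\omega^3=-\omega^1\wedge\omega^2=\pi^*\bar{\mu}$. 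In the radial splitting $T^*S^2\setminus 0\cong(0,\infty)\times SO(3)$, $(q,p)\leftrightarrow(r,u)$ with $r=|p|$, one obtains $\lambda=r\omega^2$, $d\lambda=dr\wedge\omega^2+r\,\omega^1\wedge\omega^3$, and $d\lambda-s\pi^*\bar{\mu}=d\lambda+s\,\omega^1\wedge\omega^2$.

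With this in hand I would define
\begin{equation*}
\Psi_s(r,u)\;:=\;\Big(\sqrt{r^2-s^2},\;u\cdot\exp\bigl(\arcsin(s/r)\cdot E_1\bigr)\Big),\qquad r>s,
\end{equation*}
and set $\Psi_0:=\op{id}$. Equivariance under left multiplication is immediate, and the continuous extension is built in: as $r\to s^+$ the radial factor tends to $0$ and the argument of $\exp$ tends to $\pi/2$, so $\{|p|=s\}$ is sent onto the zero section. The main computation is $\Psi_s^*(d\lambda-s\pi^*\bar{\mu})=d\lambda$. Using the right-translation rule $R_h^*\omega^i=\op{Ad}(h^{-1})^i_j\,\omega^j$ with $h(r)=\exp(\beta(r)E_1)$ and $\beta(r)=\arcsin(s/r)$, the pullback of each $\omega^i$ is a rotation by angle $\beta$ in the $(\omega^2,\omega^3)$-plane, together with a $dr$-correction proportional to $\beta'$. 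Substituting and matching coefficients of $\omega^j\wedge\omega^k$ and $dr\wedge\omega^k$, the trigonometric identities $\sin\beta=s/r$ and $\cos\beta=\sqrt{r^2-s^2}/r$ conspire so that every coefficient agrees with those of $d\lambda$.

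The main obstacle is identifying the correct ansatz. $SO(3)$-equivariance forces $\Psi_s$ to have the form $(r,u)\mapsto(\phi(r),u\cdot h(r))$ with $\phi$ and $h$ depending on $r$ alone, and the target term $s\,\omega^1\wedge\omega^2$ rules out any choice of $h$ in the "vertical" subgroup $\exp(\R E_3)$, for which $\op{Ad}(h^{-1})$ only rotates $\omega^1$ and $\omega^2$ among themselves. By contrast, rotations in $\exp(\R E_1)$ mix $\omega^2$ with $\omega^3$ under the adjoint action in just the right way to produce the desired $\omega^1\wedge\omega^2$ contribution; the equation for its coefficient determines $\sin\beta=s/r$, while the coefficient of $\omega^1\wedge\omega^3$ determines $\phi=\sqrt{r^2-s^2}$. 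Verifying that the remaining cross-term equations (in particular the $dr\wedge\omega^2$ and $dr\wedge\omega^3$ ones) are then automatically satisfied is the main arithmetic check of the proof.
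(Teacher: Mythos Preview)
Your proof is correct and follows essentially the same strategy as the paper's: write $\Psi_s$ as a composition of a radial rescaling with a flow along the ``perpendicular geodesic direction'', both depending only on $|p|$, and solve for the two unknown functions. The paper carries this out in the connection formalism on $TS^2$ (horizontal/vertical splitting, the one-forms $\zeta^X,\zeta^V$ and the flows $\Phi^Y,\Phi^{H'}$), while you recast the same ansatz in Lie-theoretic language by identifying $ST^*S^2\cong SO(3)$ and using Maurer--Cartan forms; your right translation by $\exp(\beta(r)E_1)$ is precisely the paper's $\Phi^{H'}_{a_s}$, and your $\sqrt{r^2-s^2}$ is the paper's $b_s(|p|)\,|p|$. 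The sign discrepancy $\beta=\arcsin(s/r)$ versus $a_s=-\arcsin(s/|p|)$ is only a convention. Your treatment of equivariance is cleaner (left multiplication obviously commutes with right multiplication), whereas the paper verifies the commutation with $\Phi^Y$ and $\Phi^{H'}$ by hand.

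One point worth noting: the paper actually proves the stronger identity $\Psi_s^*\lambda_s=\lambda$ for the primitive $\lambda_s=\zeta^X+s\,\zeta^V/|p|^2$, not just the equality of symplectic forms. This is what is meant later by ``strictly contactomorphic'' in Corollary~\ref{cor:iso1} and Theorems~\ref{thm:main}--\ref{thm:main2}. Your computation only matches $d\lambda$; that suffices for the proposition as stated, but if you want the strict contactomorphism downstream you should instead pull back $\lambda_s$ (in your notation, $r\omega^2-s\omega^3$, up to sign) and check it equals $r\omega^2$. The same choice of $\beta$ and $\phi$ does this with no extra work.
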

\begin{rmk}
Let us consider the contact manifold
\begin{equation*}
(\R\mathbb P^3,\lambda_{\R\mathbb P^3})\,:=\ \big(\{|p|=1\},\lambda|_{\{|p|=1\}}\big)
\end{equation*}
and let $\big((0,+\infty)\times \R\mathbb P^3,d(R\lambda_{\R\mathbb P^3})\big)$ be its symplectisation. It is known that $\big(\{|p|>0\},d\lambda\big)$ is symplectomorphic to such symplectisation. Then, Proposition \ref{prp:sym} implies that there is a symplectomorphism
\begin{equation*}
\chi_s:\big(\{|p|>0\},d\lambda-s\pi^*\bar{\mu}\big)\longrightarrow\big((s,+\infty)\times \R\mathbb P^3,d(R\lambda_{\R\mathbb P^3})\big)
\end{equation*}
and $\Psi_s$ fits into the commutative diagram
\begin{equation}\label{eq:dia}
\xymatrix{ \Big(\big\{|p|>0\big\},d\lambda\,-\,s\pi^*\bar{\mu}\Big)\ar[r]^-{\imath_1\, \circ\, \Psi^{-1}_s} \ar[d]_{\chi_s}& \Big(\big\{|p|>0\big\},d\lambda\Big)\ar[d]^{\chi_0} \\
 \Big((s,+\infty)\times \R\mathbb P^3,d(R\lambda_{\R\mathbb P^3})\Big)\ar[r]^{\imath_2} & \Big((0,+\infty)\times \R\mathbb P^3,d(R\lambda_{\R\mathbb P^3})\Big) \,,}
\end{equation}
where $\imath_1$ and $\imath_2$ are natural inclusions.
\end{rmk}

We end the introduction with a brief summary of the paper. Proposition \ref{prp:sym} will be proven in Section \ref{sec:sym}. Then, in Section \ref{sec:kat} we will show that the image of the sphere bundle of the Katok examples under $\Psi_s$ is contactomorphic to an energy level of a Hamiltonian, whose flow is a superposition of two commuting circle actions on the cotangent bundle. By suitably modifying such a Hamiltonian we construct examples satisfying the hypotheses of Theorem \ref{thm:main} and of Theorem \ref{thm:main2}, respectively.

\section*{Acknowledgements}
We are grateful to Gabriel Paternain for drawing our attention on the question and to Peter Albers for suggesting the commutative diagram \eqref{eq:dia}. We also thank Viktor Ginzburg and Leonardo Macarini for useful discussions. The author was supported by the DFG grant ``SFB 878 - Groups, Geometry, and Actions".

\section{The family of symplectomorphisms}\label{sec:sym}
We begin with some preliminaries from the geometry of the tangent bundle of the two-sphere (see \cite{GK02} for the proofs and more details).
\bigskip

For every $(q,v)\in TS^2$ we have linear maps $L^\mathcal H_{(q,v)}:T_qS^2\rightarrow T_{(q,v)}TS^2$ and $L^{\mathcal V}_{(q,v)}T_qS^2\rightarrow T_{(q,v)}TS^2$. The former is the \textit{horizontal lift} associated to the Levi-Civita connection of $\bar{g}$, while the latter is the \textit{vertical lift} induced by the inclusion of the vertical fibers $T_qS^2\hookrightarrow TS^2$. If $\mathcal H$ and $\mathcal V$ are the distributions in $T(TS^2)$ given by the images of these maps, we have
\begin{equation}
\mathcal H\oplus\mathcal V\ =\ T(TS^2)\,. 
\end{equation}
Such a splitting yields also two projection maps $P^\mathcal V_{(q,v)}:T_{(q,v)}TS^2\rightarrow T_qS^2$ and $P^\mathcal H_{(q,v)}:T_{(q,v)}TS^2\rightarrow T_qS^2$ with the property that
\begin{align}
\ker P^\mathcal V_{(q,v)}\ =\ \mathcal V\,,&\quad P^\mathcal V_{(q,v)}\circ L^\mathcal H_{(q,v)}\ =\ \mathop{Id}_{T_qS^2}\,,\\
\ker P^\mathcal H_{(q,v)}\ =\ \mathcal H\,,&\quad P^\mathcal H_{(q,v)}\circ L^\mathcal V_{(q,v)}\ =\ \mathop{Id}_{T_qS^2}\,.
\end{align}
We have $P^{\mathcal V}=d\pi$, while $P^{\mathcal H}$ is called the \textit{connection map}.

Let us denote by $\jmath:TS^2\rightarrow TS^2$ the fibrewise rotation by an angle of $\pi/2$. We define the following four vector fields on $TS^2$
\begin{equation}
\left\{
\begin{aligned}
X_{(q,v)}\,&:=\ L^\mathcal H_{(q,v)}(v)\,,& Y_{(q,v)}\,:=& \ L^{\mathcal V}_{(q,v)}(v)\,,\\
H_{(q,v)}\,&:=\ L^\mathcal H_{(q,v)}(\jmath_qv)\,,& V_{(q,v)}\,:=& \ L^{\mathcal V}_{(q,v)}(\jmath_qv)\,.
\end{aligned}
\right.
\end{equation}
In a dual fashion, we define four $1$-forms on $TS^2$
\begin{equation}
\left\{
\begin{aligned}
\zeta^X_{(q,v)}\,&:=\ \bar{g}_q\big(P^\mathcal V_{(q,v)}(\cdot),v\big)\,,&\zeta^Y_{(q,v)}\,&:=\ \bar{g}_q\big(P^\mathcal H_{(q,v)}(\cdot),v\big)\,,\\
\zeta^H_{(q,v)}\,&:=\ \bar{g}_q\big(P^\mathcal V_{(q,v)}(\cdot),\jmath_qv\big)\,,& \zeta^V_{(q,v)}\,&:=\ \bar{g}_q\big(P^\mathcal H_{(q,v)}(\cdot),\jmath_qv\big)\,.
\end{aligned}
\right.
\end{equation}
Using the identification $TS^2\rightarrow T^*S^2$ given by the metric $\bar{g}$, we can push forward the vector fields and the $1$-forms we have just defined to the cotangent bundle of the two-sphere. By abusing notation we will use the same names for the new objects. We have the relations
\begin{equation}
\lambda\ =\ \zeta^X\,,\quad\quad \pi^*\sigma\ =\ d\left(-\frac{\zeta^V}{|p|^2}\right)\,.
\end{equation}
In particular, on the complement of the zero section we have
\begin{equation}
\omega_{s\bar{\mu}}\ =\ d\lambda_s\,,\quad\quad \lambda_s\,:=\ \zeta^X\,+\,s\frac{\zeta^V}{|p|^2}\,.
\end{equation}

Let us consider now the flow of the vector fields $Y$ and $H':=\frac{H}{|p|}$. If $b>0$, then we have $\Phi^Y_{\log b}(q,p)=(q,bp)$. On the other hand, the trajectory $t\mapsto\Phi^{H'}_t(q,v)$ is given by the curve $(\gamma,-|v|\jmath_\gamma \dot\gamma)$, where $\gamma:\R\rightarrow S^2$ is a unit speed geodesic such that $\gamma(0)=q$ and $\dot\gamma(0)=v/|v|$. There holds
\begin{equation}
\left\{\begin{aligned}
(\Phi^{Y}_{\log b})^*\zeta^X\ &=\ b\,\zeta^X\,,&\quad (\Phi^{H'}_a)^*\zeta^X\ &=\ \cos a\,\zeta^X\,+\,\sin a\,\frac{\zeta^V}{|p|}\,,\\ 
(\Phi^{Y}_{\log b})^*\frac{\zeta^V}{|p|}\ &=\ b\,\frac{\zeta^V}{|p|}\,,&\quad (\Phi^{H'}_a)^*\frac{\zeta^V}{|p|}\ &=\ \cos a\,\frac{\zeta^V}{|p|}\,-\,\sin a\,\zeta^X\,.
\end{aligned}\right.
\end{equation}

In view of these formulae, we make the Ansatz
\begin{equation}
\Psi_s(q,p)\,:=\ \Phi^Y_{\log b_s(|p|)}\Big(\Phi^{H'}_{a_s(|p|)}(q,p)\Big)\,,
\end{equation}
where $a_s:(\rho_s,+\infty)\rightarrow\R$ and $b_s:(\rho_s,+\infty)\rightarrow(0,+\infty)$ are functions to be determined.
We compute
\begin{align*}
d\Psi_s\cdot \xi\ &=\ \Big(d\big(\log b_s\circ|p|\big)\cdot\xi\Big)Y_{\Psi_s}\ +\ \Big(d\big(a_s\circ|p|\big)\cdot\xi\Big)d_{\Phi^{H'}_{a_s}}\!\Phi^Y_{\log b_s}H'_{\Phi^{H'}_{a_s}}\\
&+\ d_{\Phi^{H'}_{a_s}}\!\Phi^Y_{\log b_s}d\Phi^{H'}_{a_s}\cdot\xi\,.
\end{align*}
Since $\zeta^X$ and $\zeta^V$ vanish on both $Y$ and $H'$ only the last term above contributes to $\Psi_s^*\lambda_s$:
\begin{align*}
\Psi_s^*\lambda_s\ &=\ \big(\Phi^{H'}_{a_s}\big)^*\Big(\big(\Phi^{Y}_{\log b_s}\big)^*\lambda_s\Big)\\
&=\ \big(\Phi^{H'}_{a_s}\big)^*\Big(b_s\zeta^X\,+\,\frac{s}{|p|}\frac{\zeta^V}{|p|}\Big)\\
&=\ b_s\Big(\cos a_s\,\zeta^X\,+\,\sin a_s\,\frac{\zeta^V}{|p|}\Big)\, +\, \frac{s}{|p|}\Big(\cos a_s\,\frac{\zeta^V}{|p|}\,-\,\sin a_s\,\zeta^X\Big)\\
&=\ \Big(b_s\cos a_s\,-\,\frac{s\sin a_s}{|p|}\Big)\zeta^X\, +\, \Big(b_s\sin a_s\,+\,\frac{s\cos a_s}{|p|}\Big)\frac{\zeta^V}{|p|}\,.
\end{align*}
Therefore, $\Psi_s^*\lambda_s=\lambda$ as soon as
\begin{equation}
\left\{\begin{aligned}
b_s\cos a_s\,-\,\frac{s\sin a_s}{|p|}\ &=\ 1\,,\\
b_s\sin a_s\,+\,\frac{s\cos a_s}{|p|}\ &=\ 0\,.
\end{aligned}\right.
\end{equation}
We express $b_s$ as a function of $a_s$ from the second equation and we obtain $b_s=-\frac{s}{|p|\tan a_s}$. Substituting $b_s$ in the first equation yields
\begin{equation}
\sin a_s\ =\ -\frac{s}{|p|}
\end{equation}
and we also get
\begin{equation}
b_s\ =\ \sqrt{1-\frac{s^2}{|p|^2}}\,.
\end{equation}
Therefore, we can take $\rho_s=s$ and the functions $a_s:(s,+\infty)\rightarrow(-\pi/2,0)$ and $b_s:(s,+\infty)\rightarrow(0,1)$ are both monotonically increasing bijections.

On the tangent bundle the map $\Psi_s$ has the following easy description. Let $(q,v)$ be such that $|v|>s$ and let $\Pi(q,v)\in S^2$ be the centre of the positive hemisphere bounded by the geodesic passing through $q$ with velocity $v$. Then, let $(\theta,\varphi)\in[0,\pi]\times\R/2\pi\Z$ be positively oriented geodesic polar coordinates centered at $\Pi(q,v)$ and consider induced coordinates $(\theta,\varphi,v_\theta,v_\varphi)$ on $TS^2$. 
We have the identification $(q,v)=\big(\pi/2,\varphi(q),0,|v|\big)$, for some $\varphi(q)\in \R/2\pi\Z$. In these coordinates the following formula holds
\begin{equation*}
\Psi_s\big(\pi/2,\varphi(q),0,|v|\big)\ =\ \big(\theta(s),\varphi(q),0,|v|\big)\,,\quad \mbox{where}\ \cos\theta(s)\ =\ \frac{s}{|v|}\,. 
\end{equation*} 
In view of this description we also see that we can extend $\Psi_s$ by setting $\Psi_s(q,v)=(\Pi(q,v),0)$, if $|v|=s$. 

We now observe that $\Psi_s$ is equivariant with respect to any orientation preserving isometry $I:S^2\rightarrow S^2$. Consider the maps
\begin{align}
\check{I}:&TS^2\ \longrightarrow\ TS^2\,,&\ \check I(q,v)\,&:=\ \big(I(q),d_qI(v)\big)\,,\\
\hat I:&T^*S^2\ \longrightarrow\ T^*S^2\,,&\ \hat I(q,p)\,&:=\ \big(I(q),p\circ (d_qI)^{-1}\big)\,.\label{eq:ind}
\end{align}
Since $I$ is an isometry, $\check I$ and $\hat I$ are conjugated by the metric duality. Therefore, it is enough to prove that for every $a\in\R$ and $b\in (0,+\infty)$ we have
\begin{equation}
\Phi^Y_{\log b}\circ \check I\ =\ \check I\circ \Phi^Y_{\log b}\,,\quad\quad \Phi^{H'}_a\circ \check I\ =\ \check I\circ \Phi^{H'}_a\,.
\end{equation}
For the first identity we compute
\begin{equation*}
\check I\big(\Phi^Y_{\log b}(q,v)\big) = \check I(q,bv) = \big(I(q),d_qI\cdot bv\big) = \big(I(q),b\,d_qI\cdot v\big) = \Phi^Y_{\log b}\big(\check I(q,v)\big)\,.
\end{equation*}
For the second identity we preliminarily observe that since $I$ is orientation preserving, we have $\check I\circ \jmath=\jmath\circ \check I$. Then,
\begin{align*}
\check I\big(\Phi^{H'}_a(q,v)\big) = \check I\big(\gamma(a),-|v|\jmath_{\gamma(a)}\dot\gamma(a)\big)& = \big(I(\gamma(a)),-|v|d_{\gamma(a)} I\cdot\jmath_{\gamma(a)}\dot\gamma(a)\big)\\
& = \big((I\circ\gamma)(a),-|v|\jmath_{(I\circ\gamma)(a)}\dot{(I\circ\gamma)}(a)\big)\,.
\end{align*}
The conclusion follows as $I\circ\gamma$ is still a geodesic. The proof of Proposition \ref{prp:sym} is now complete.

\section{The construction of the examples}\label{sec:kat}
In this section we construct examples satisfying the hypotheses of Theorem \ref{thm:main} and Theorem \ref{thm:main2}, respectively. As a first common step, we see how the two standard Hamiltonian circle actions on $(T^*S^2,\omega_0)$, the one given by the geodesic flow and the one given by the rotation around an axis, behave under the map $\Psi_s$ constructed in the previous section. For every $\alpha\\in[0,1)$, this will enable us to construct a family of Hamiltonians $H_{s,\alpha}:(T^*S^2,\omega_{s\bar\mu})\to\R$ whose dynamics is conjugated, up to time reparametrisation, to the geodesic flow of the Katok example with parameter $\alpha$. By suitably modifying the functions $H_{s,\alpha}$ we will get the desired examples.
\bigskip

Let $(\theta,\varphi)\in[0,\pi]\times\R/2\pi\Z$ be some positively oriented geodesic polar coordinates for the metric $\bar{g}$. In these coordinates the metric $\bar{g}$ and the form $\bar{\mu}$ have the expressions
\begin{equation}
\bar{g}\ =\ d\theta^2\ +\ (\sin\theta)^2 d\varphi^2\,,\quad\quad\bar{\mu}\ =\ \sin\theta\, d\theta\wedge d\varphi\,.
\end{equation}
We consider the vector field $\partial_\varphi\in\Gamma(S^2)$. It is such that
\begin{enumerate}
\item it generates a flow of isometries $t\mapsto\Phi^{\partial_\varphi}_t$;
\item the $1$-form $\imath_{\partial_\varphi}\bar{\mu}$ is exact. Indeed,
\begin{equation*}
\imath_{\partial_\varphi}\big(\sin\theta\, d\theta\wedge d\varphi\big)\ =\ -\sin\theta\, d\theta\ =\ d(\cos\theta)\,.
\end{equation*}
\end{enumerate}
We take $h:=\cos\theta$ as preferred primitive for $\imath_{\partial_\varphi}\bar{\mu}$. Moreover, we write $\beta\in\Omega^1(S^2)$ for the $1$-form dual to $\partial_\varphi$ with respect to $\bar{g}$. In coordinates it has the expression $\beta=(\sin\theta)^2\, d\varphi$. Finally, let $t\mapsto{\hat{\Phi}}^{\partial_\varphi}_t$ be the flow on $T^*S^2$ induced by the differential of $\Phi^{\partial_\varphi}$ as prescribed by formula \eqref{eq:ind}.
The flow $\hat\Phi^{\partial_\varphi}$ induces a free $\R/2\pi\Z$-action on the complement of the zero section and it is Hamiltonian with respect to $\omega_{s\bar{\mu}}$. This can be seen as a generalisation of Exercise 4.2A in \cite{AM78}, which deals with exact magnetic forms.
\begin{lem}
The flow $\hat\Phi^{\partial_\varphi}$ is generated by the Hamiltonian vector field $X_{\Omega_s,s\bar{\mu}}$, where
\begin{equation}
\Omega_s(q,p)\,:=\ p\big((\partial_\varphi)_q\big)\ +\ s\,h(q)\,.
\end{equation}
\end{lem}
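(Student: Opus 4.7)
Let $\hat X$ denote the infinitesimal generator of $\hat\Phi^{\partial_\varphi}$, namely the cotangent lift of $\partial_\varphi$ to $T^*S^2$. The goal is to verify the identity $\imath_{\hat X}\omega_{s\bar{\mu}}=-d\Omega_s$. Since $\omega_{s\bar\mu}=d\lambda-s\pi^*\bar\mu$, I will compute $\imath_{\hat X}d\lambda$ and $\imath_{\hat X}(s\pi^*\bar\mu)$ separately and then combine them.

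For the canonical part I will invoke the classical fact that the cotangent lift of any vector field $Y$ on $S^2$ preserves the Liouville form $\lambda$, so by Cartan's formula $\imath_{\hat X}d\lambda=-d(\imath_{\hat X}\lambda)$. Evaluating the tautological form on a cotangent lift gives $\lambda_{(q,p)}(\hat X_{(q,p)})=p\bigl(d\pi\,\hat X_{(q,p)}\bigr)=p\bigl((\partial_\varphi)_q\bigr)$, which is precisely the first summand of $\Omega_s$. Hence $\imath_{\hat X}d\lambda=-d\bigl((q,p)\mapsto p((\partial_\varphi)_q)\bigr)$.

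For the magnetic correction, I will use that $\pi^*\bar\mu$ is the pullback of a $2$-form on the base, so its contraction with $\hat X$ depends only on the projection $d\pi\,\hat X=\partial_\varphi$. More precisely, for every $\xi\in T_{(q,p)}T^*S^2$,
\begin{equation*}
(\pi^*\bar\mu)(\hat X,\xi)\ =\ \bar\mu\bigl(\partial_\varphi,\,d\pi\,\xi\bigr)\ =\ (\imath_{\partial_\varphi}\bar\mu)(d\pi\,\xi)\ =\ \bigl(\pi^*(\imath_{\partial_\varphi}\bar\mu)\bigr)(\xi)\,.
\end{equation*}
Since, by the choice of primitive recalled above, $\imath_{\partial_\varphi}\bar\mu=dh$, this yields $\imath_{\hat X}\pi^*\bar\mu=\pi^*dh=d(h\circ\pi)$. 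Combining the two computations,
\begin{equation*}
\imath_{\hat X}\omega_{s\bar\mu}\ =\ -d\bigl(p((\partial_\varphi)_q)\bigr)\ -\ s\,d(h\circ\pi)\ =\ -d\Omega_s\,,
\end{equation*}
which is the defining identity of the Hamiltonian vector field $X_{\Omega_s,s\bar{\mu}}$.

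There is no serious obstacle: the lemma amounts to a computational verification that repackages two elementary facts (the cotangent-lift momentum formula for $d\lambda$ and the exactness of $\imath_{\partial_\varphi}\bar\mu$) in the presence of the magnetic twist. The only point that deserves explicit mention is the base-descent of the contraction $\imath_{\hat X}\pi^*\bar\mu$; once that is in place the two summands of $\Omega_s$ match the Liouville and twist contributions automatically.
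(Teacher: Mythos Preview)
Your proof is correct and follows essentially the same route as the paper: both split $\imath_{\hat X}\omega_{s\bar\mu}$ into the canonical piece $\imath_{\hat X}d\lambda=-d\bigl(p((\partial_\varphi)_q)\bigr)$ and the magnetic piece $\imath_{\hat X}\pi^*\bar\mu=\pi^*(\imath_{\partial_\varphi}\bar\mu)=d(h\circ\pi)$. The only difference is cosmetic---the paper cites \cite[Corollary 4.2.11]{AM78} for the canonical part, while you spell out the Cartan-formula argument explicitly.
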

\begin{proof}
By Corollary 4.2.11 in \cite{AM78}, $\hat\Phi^{\partial_\varphi}$ is a Hamiltonian flow with respect to the symplectic form $\omega_0$ and with associated function $p\big((\partial_\varphi)_q\big)$. Therefore, the thesis follows by computing
\begin{equation*}
\imath_{\left(\frac{d}{dt}\hat\Phi^{\partial_\varphi}_t\right)}\pi^*\sigma\ =\ \imath_{\partial_\varphi}\sigma\ =\ dh\,.\qedhere
\end{equation*}
\end{proof}

Let us define $R_s:T^*S^2\rightarrow\R$ by $R_s(q,p):=\sqrt{|p|^2+s^2}$. The importance of this function is clarified by the following lemma.
\begin{lem}
For every $s\geq0$, we have
\begin{equation}
R_s\circ \Psi_s\ =\ R_0\,,\quad\quad \Omega_s\circ \Psi_s\ =\ \Omega_0\,,\quad\mbox{on  }\ \big\{|p|\geq s\big\}\,.
\end{equation} 
\end{lem}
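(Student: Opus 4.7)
The plan is to dispose of the two identities separately.

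For $R_s\circ\Psi_s=R_0$ the argument is a direct computation using the explicit form of $\Psi_s$ from Section~\ref{sec:sym}. The flow $\Phi^{H'}_a$ preserves the fibre norm, since in the tangent-bundle picture its fibre coordinate $-|v|\jmath_\gamma\dot\gamma$ has constant norm $|v|$, and $\Phi^Y_{\log b}$ scales $|p|$ by $b$. Hence $|\Psi_s(q,p)|=b_s(|p|)\,|p|$, and substituting $b_s(|p|)=\sqrt{1-s^2/|p|^2}$ yields $|\Psi_s(q,p)|^2=|p|^2-s^2$. Adding $s^2$ and taking the square root gives $R_s\circ\Psi_s=R_0$ on $\{|p|>s\}$, and the identity extends by continuity to $\{|p|\geq s\}$.

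For $\Omega_s\circ\Psi_s=\Omega_0$ I argue via the rotational symmetry of $\bar{g}$. The rotation $\Phi^{\partial_\varphi}_t$ is an orientation-preserving isometry of $\bar{g}$ for every $t$, so by the equivariance established at the end of Section~\ref{sec:sym} (transferred to the cotangent bundle via the metric duality that identifies $\check I$ with $\hat I$), the map $\Psi_s$ intertwines $\hat\Phi^{\partial_\varphi}$ on its source with $\hat\Phi^{\partial_\varphi}$ on its target. By the preceding lemma, the infinitesimal generator of $\hat\Phi^{\partial_\varphi}$ is the Hamiltonian vector field of $\Omega_s$ with respect to $\omega_{s\bar{\mu}}$ and, at $s=0$, of $\Omega_0$ with respect to $d\lambda$. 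Combining this with $\Psi_s^*\omega_{s\bar{\mu}}=d\lambda$ and pulling back the defining equation yields $d(\Omega_s\circ\Psi_s)=d\Omega_0$ on the connected set $\{|p|>s\}$, so $\Omega_s\circ\Psi_s-\Omega_0\equiv C$ for some constant $C$.

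To pin down $C$ I use the continuous extension of $\Psi_s$ to the boundary $\{|p|=s\}$, where $\Psi_s(q,v)=(\Pi(q,v),0)$. There the identity reads $s\,h(\Pi(q,v))=\bar{g}_q(v,\partial_\varphi)+C$ for every $(q,v)$ with $|v|=s$. Viewing $S^2\subset\R^3$ as the unit sphere, $\bar{g}_q(v,\partial_\varphi)$ is the $z$-component of the Euclidean angular momentum $q\times v$, whose magnitude equals $|v|=s$ and whose direction is $\pm\Pi(q,v)$; hence its $z$-component equals $\pm s\,h(\Pi(q,v))$. A one-point orientation check, for instance at $q$ on the equator with $v=s\,\partial_\varphi/|\partial_\varphi|_q$ pointing east so that $\bar{g}_q(v,\partial_\varphi)=s$ and $\Pi(q,v)$ is the north pole, forces $C=0$.

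The only genuinely delicate point I anticipate is this last orientation check: one has to verify that the conventions defining $\bar{\mu}$, the complex structure $\jmath$, and the ``positive hemisphere'' in the definition of $\Pi$ combine consistently so that the angular-momentum sign comes out $+$ rather than $-$ at the chosen test point. Once this is settled, the identity on the boundary propagates to $\{|p|>s\}$ via the constancy of $\Omega_s\circ\Psi_s-\Omega_0$, finishing the proof.
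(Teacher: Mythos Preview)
Your proof is correct and follows the same two-part structure as the paper: a direct norm computation for $R_s\circ\Psi_s=R_0$, and for $\Omega_s\circ\Psi_s=\Omega_0$ the observation that $\Psi_s$-equivariance under $\hat\Phi^{\partial_\varphi}$ forces the difference to be a constant, followed by a boundary evaluation to pin it down.

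The only point of departure is how the constant $C$ is identified. You evaluate at a single boundary point via the angular-momentum interpretation of $\Omega_0$ and the explicit description $\Psi_s(q,v)=(\Pi(q,v),0)$, which then requires the orientation check you flag. The paper instead compares the \emph{ranges} of $\Omega_s\circ\Psi_s$ and $\Omega_0$ on the whole of $\{|p|=s\}$: since $\Omega_s\circ\Psi_s(\{|p|=s\})=\Omega_s(\{|p|=0\})=s\,h(S^2)=[-s,s]$ and $\Omega_0(\{|p|=s\})=[-s\sup|\partial_\varphi|,\,s\sup|\partial_\varphi|]=[-s,s]$, the constant must vanish. This sidesteps any orientation issue entirely, so you might prefer it; but your argument is sound once the sign is checked.
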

\begin{proof}
From the formula for $\Psi_s$ we know that
\begin{equation*}
|\Psi_s(q,p)|\ =\ |b_s(|p|)p|\ =\ \sqrt{|p|^2-s^2}\quad\Longrightarrow\quad \sqrt{|\Psi_s(q,p)|^2+s^2}\ =\ |p|\,.
\end{equation*}
For the second equality we argue indirectly. Since $\Psi_s$ is $\hat\Phi^{\partial_\varphi}_t$ equivariant for every $t$, we know that $d\Psi_s\cdot X_{\Omega_0,0}=X_{\Omega_s,s\bar{\mu}}$. Therefore, $\Omega_s\circ \Psi_s$ and $\Omega_0$ have the same $\omega_0$-Hamiltonian flow. This means that there exists a constant $c(s)$ such that $\Omega_s\circ \Psi_s=\Omega_0+c(s)$. In order to find the value of $c(s)$, we simply study the image of $\{|p|=s\}$ under $\Omega_s\circ \Psi_s$ and $\Omega_0$:
\begin{align*}
\Omega_s\circ \Psi_s\Big(\big\{|p|=s\big\}\Big)\ &=\ \Omega_s\Big(\big\{|p|=0\big\}\Big)\ =\ s\,h(S^2)\ =\ [-s,s]\,,\\
\Omega_0\Big(\big\{|p|=s\big\}\Big)\ &=\ \big[-s\,\sup|\partial_\varphi|,s\,\sup|\partial_\varphi|\,\big]\ =\ [-s,s]\,.
\end{align*}
Therefore $c(s)=0$.
\end{proof}
Let $\alpha\in[0,+\infty)$ and define the function
\begin{equation}
H_{s,\alpha}\,:=\ R_s\ +\ \alpha\,\Omega_s\,. 
\end{equation}
In view of the previous lemma, we have $H_{s,\alpha}\circ \Psi_s= H_{0,\alpha}$ on $\{|p|\geq s\}$. Let us analyse now the properties of $H_{s,\alpha}$.
\begin{lem}\label{lem:has}
The function $H_{s,\alpha}$ is fibrewise strictly convex. It is fibrewise coercive if and only if $\alpha\in[0,1)$. A level set $\{H_{0,\alpha}=c\}$ is contained in $\{|p|>s\}$ (or, equivalently, a level set $\{H_{s,\alpha}=c\}$ is contained in $\{|p|>0\}$) if and only if $c>s(1+\alpha)$.
\end{lem}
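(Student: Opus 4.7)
The plan is to verify the three assertions of the lemma—strict convexity, coercivity, and the level-set characterization—in turn, by direct fibrewise analysis.

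For \emph{strict convexity}, I fix $q\in S^2$ and note that $\Omega_s$ restricts to an affine function on $T_q^*S^2$ (a linear form in $p$ plus the constant $sh(q)$), so it contributes nothing to the fibrewise Hessian. It therefore suffices to check strict convexity of $R_s|_{T_q^*S^2}=\sqrt{|p|_q^2+s^2}$, and a direct computation yields
\[
\op{Hess}(R_s)(v,v)\ =\ \frac{(|p|_q^2+s^2)|v|_q^2\,-\,\bar{g}_q(p,v)^2}{(|p|_q^2+s^2)^{3/2}}\,,
\]
which is positive-definite for $s>0$ by Cauchy-Schwarz; the radial eigenvalue is bounded below by $s^2/(|p|_q^2+s^2)^{3/2}$, while the tangential one is manifestly positive.

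For \emph{coercivity}, I would examine the limit $t^{-1}H_{s,\alpha}(q,tp_0)\to 1+\alpha\, p_0((\partial_\varphi)_q)$ as $t\to+\infty$ along a ray in the fibre generated by a unit covector $p_0$. Cauchy-Schwarz gives $|p_0((\partial_\varphi)_q)|\leq|(\partial_\varphi)_q|_q=|\sin\theta(q)|\leq 1$, with equality at equator points when $p_0$ is proportional to $\beta_q$. The limit is therefore bounded below by $1-\alpha$, which is strictly positive precisely when $\alpha<1$; a uniform positive lower bound across $q$ and $p_0$ forces fibrewise coercivity, while at $\alpha\geq 1$ the explicit equator choice $p_0=-\beta_q/|\beta_q|_q$ produces a fibre on which $H_{s,\alpha}$ is bounded above (for $\alpha=1$) or tends to $-\infty$ (for $\alpha>1$), breaking coercivity.

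For the \emph{level-set characterization}, equivalence of the two formulations is immediate from $H_{s,\alpha}\circ\Psi_s=H_{0,\alpha}$ on $\{|p|\geq s\}$ together with the fact, furnished by Proposition \ref{prp:sym}, that $\Psi_s$ sends $\{|p|=s\}$ onto the zero section. I then restrict $H_{s,\alpha}$ to $\{|p|=0\}$ to obtain $H_{s,\alpha}(q,0)=s(1+\alpha\cos\theta(q))$, whose image over $S^2$ is the closed interval $[s(1-\alpha),s(1+\alpha)]$; hence $\{H_{s,\alpha}=c\}$ is disjoint from the zero section iff $c$ lies outside this interval. To exclude the range $c<s(1-\alpha)$ producing a nonempty level set away from the zero section, I would solve the fibrewise critical-point equation $p/R_s=-\alpha\beta_q$ coming from $d_pH_{s,\alpha}=0$; the resulting fibrewise minimum equals $s\bigl(\sqrt{1-\alpha^2\sin^2\theta(q)}+\alpha\cos\theta(q)\bigr)$, is monotonically increasing in $\cos\theta(q)$, and hence attains its global infimum $s(1-\alpha)$ at the south pole, where it coincides with $H_{s,\alpha}(q,0)$.

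The main obstacle is this last step: verifying that the global minimum of $H_{s,\alpha}$ is precisely $s(1-\alpha)$, so that no level with $c<s(1-\alpha)$ yields a nonempty level set in $\{|p|>0\}$. Everything else is a short application of Cauchy-Schwarz combined with the explicit expressions for $R_s$, $\Omega_s$, and $\beta=(\partial_\varphi)^\flat=(\sin\theta)^2 d\varphi$ in the geodesic polar coordinates introduced at the start of the section.
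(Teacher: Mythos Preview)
Your argument is correct.  Parts one and two are close to the paper's: for convexity the paper argues by composition ($|p|$ is convex and $m\mapsto\sqrt{m^2+s^2}$ is convex and increasing) rather than by your direct Hessian computation, and for coercivity it first reduces to $s=0$ via $\Psi_s$ and then records the range $H_{0,\alpha}(\{|p|=m\})=[m(1-\alpha),\,m(1+\alpha)]$, but the underlying content is the same Cauchy--Schwarz bound $|p(\partial_\varphi)|\le|p|$ that you use.

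The genuine difference is in the third assertion.  The paper stays on the $H_{0,\alpha}$ side, where the range formula just quoted immediately gives
\[
\sup_{|p|\le s}H_{0,\alpha}\ =\ s(1+\alpha),
\]
and this single line finishes the proof.  You instead pass to the $H_{s,\alpha}$ side, solve the fibrewise critical-point equation to obtain the minimum value $s\bigl(\sqrt{1-\alpha^2\sin^2\theta}+\alpha\cos\theta\bigr)$, and then check its monotonicity in $\cos\theta$.  This is correct, but the paper's route sidesteps that computation entirely: on the $s=0$ side $\Omega_0$ is purely linear in $p$ (no constant term $sh$), so the image of $H_{0,\alpha}$ on each sphere $\{|p|=m\}$ is immediate, and both the coercivity criterion and the level-set characterisation drop out together.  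Your approach is more hands-on; the paper's buys economy by choosing the side of $\Psi_s$ on which the problem is homogeneous.
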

\begin{proof}
We observe that $R_s$ is strictly convex in the fibers since it is the composition of the fibrewise convex function $(q,p)\mapsto |p|$ and of the real function $m\mapsto \sqrt{m^2+s^2}$, which is convex and strictly increasing for $m\geq0$. The function $\alpha \Omega_s$ is also convex in the fibers as it is affine. Hence, the first statement follows since the sum of two convex functions is still convex. 

It is enough to prove the second statement for $s=0$, since $\Psi_s$ is a coercive map. In this case we see that, for every $m>0$,
\begin{equation}
H_{0,\alpha}\Big(\big\{|p|=m\big \}\Big)\ =\ \big[m(1-\alpha),m(1+\alpha)\big]\,.
\end{equation}
The conclusion follows by noticing that
\begin{equation}
\lim_{m\rightarrow+\infty}m(1-\alpha)\ =\ +\infty\quad \Longleftrightarrow\quad \alpha\in[0,1)\,.
\end{equation}
For the third statement we observe that
\begin{equation}
\big\{H_{0,\alpha}=c\big\}\ \subset\ \big\{|p|>s\big\}\quad\Longleftrightarrow\quad c\ >\ \sup_{|p|\leq s} H_{0,\alpha}\ =\ s(1+\alpha)\,.\qedhere
\end{equation}
\end{proof}

\begin{cor}\label{cor:iso1}
If $\alpha\in[0,1)$ and $c>s(1+\alpha)$, then the double cover of the Hamiltonian flow of $H_{s,\alpha}$ at level $c$ is conjugated with the Reeb flow of the ellipsoid $E_\alpha$.
\end{cor}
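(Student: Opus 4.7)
The plan is to first reduce to the case $s = 0$ by means of the symplectomorphism $\Psi_s$ constructed in Proposition \ref{prp:sym}, and then to identify the resulting flow with that of a Katok example and apply the contactomorphism of Harris and G.~Paternain \cite{HP08}. All the real content sits in the Harris--Paternain step; the rest is bookkeeping.

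First I would invoke the previous lemma together with Lemma \ref{lem:has}. The hypothesis $c > s(1+\alpha)$ guarantees both $\{H_{s,\alpha}=c\}\subset\{|p|>0\}$ and $\{H_{0,\alpha}=c\}\subset\{|p|>s\}$. Since $\Psi_s$ is a symplectomorphism from $(\{|p|>s\},d\lambda)$ onto $(\{|p|>0\},\omega_{s\bar\mu})$ that satisfies $H_{s,\alpha}\circ\Psi_s=H_{0,\alpha}$, it intertwines the Hamiltonian vector fields $X_{H_{0,\alpha},0}$ and $X_{H_{s,\alpha},s\bar\mu}$ and therefore conjugates the two Hamiltonian flows on the respective level sets; this conjugation lifts to the double covers. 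Thus I may reduce the whole statement to the case $s=0$.

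Next, I would use that $H_{0,\alpha}$ is positively $1$-homogeneous in $p$, so that $\{H_{0,\alpha}=c\}$ is a star-shaped hypersurface of contact type in $(T^*S^2,d\lambda)$ and its Hamiltonian flow coincides with the Reeb flow of $\lambda|_{\{H_{0,\alpha}=c\}}$ up to the constant time rescaling by $1/c$. By Lemma \ref{lem:has} (which uses $\alpha\in[0,1)$) the fiberwise restriction of $H_{0,\alpha}$ is strictly convex and coercive, so it is the dual of a Finsler norm $F_\alpha$ on $S^2$. A direct inspection identifies $F_\alpha$ with the Katok Finsler metric of parameter $\alpha$: indeed $H_{0,\alpha}=R_0+\alpha\Omega_0$ is the sum of the round dual norm and the linear function $p\mapsto p(\partial_\varphi)$ coming from the Killing vector field $\partial_\varphi$, which is exactly the Zermelo/Katok recipe. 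The fiberwise dilation $(q,p)\mapsto(q,cp)$ conjugates the Hamiltonian flow at level $1$ with that at level $c$ up to a constant time rescaling, so I may further assume $c=1$.

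Finally, I would appeal to \cite[Sections 4 and 5]{HP08}, which proves that the double cover of the unit cotangent bundle of the Katok Finsler metric of parameter $\alpha$ is strictly contactomorphic to $E_\alpha$; in particular their Reeb flows are conjugated. Chaining this with the two reductions above yields the corollary. The main difficulty of the argument is absorbed into this last citation: matching the two commuting flows of $R_0$ and $\Omega_0$ on the double cover $S^3$ of $\{|p|=1\}\cong\R\mathbb P^3$ with the two commuting Hopf circle actions whose superposition is the Reeb flow of $E_\alpha\subset\C^2$ is precisely what Harris and Paternain carry out.
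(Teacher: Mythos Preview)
Your proof is correct and follows essentially the same route as the paper: reduce to $s=0$ via $\Psi_s$ and the identity $H_{s,\alpha}\circ\Psi_s=H_{0,\alpha}$ (using Lemma~\ref{lem:has} to ensure the level sets lie in the domain of $\Psi_s$), then invoke \cite[Section~5]{HP08}. The paper compresses the second step into a single citation, whereas you spell out why it applies by identifying $H_{0,\alpha}=|p|+\alpha\,p(\partial_\varphi)$ with the Zermelo/Katok co-Finsler norm and reducing to $c=1$ by homogeneity; this extra bookkeeping is sound and makes the appeal to Harris--Paternain transparent.
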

\begin{proof}
We know that $\Psi_s^*\omega_{s\bar{\mu}}=\omega_0$ and $H_{s,\alpha}\circ \Psi_s=H_{0,\alpha}$. By Lemma \ref{lem:has}, the condition $c>s(1+\alpha)$ ensures that it is enough to prove the corollary assuming $s=0$. In this case the statement holds by \cite[Section 5]{HP08}. 
\end{proof}
We are now in position to construct the examples in the class $\W(S^2)$. \begin{proof}[Proof of Theorem \ref{thm:main2}]
Let $s>0$ be given and consider some real number $\varepsilon>0$. We can define the function
\[
H_s:\big\{|p|<\delta_{s,\varepsilon}\big\}\ \longrightarrow\ \R\,,\qquad H_s\,:=\ \frac{R_s-s}{1+\varepsilon(s-\Omega_s)}\,,
\]
where $\delta_{s,\varepsilon}^2:=1/\varepsilon^2+2s/\varepsilon$. We claim that $H_s\in\W(S^2)$, up to shrinking the domain of definition. The claim follows by computing the 2-jet of $H_s$ at the zero section. For the computation we use that $H_s$ is obtained by multiplying the function $R_s-s$, which lies in $\W(S^2)$, with the positive function $\big(1+\varepsilon(s-\Omega_s)\big)^{-1}$. Indeed, for all $q\in S^2$ we have that $H_s(q,0)=0$, $d_{(q,0)}H_s=0$ and the vertical Hessian is
\[
g(H_s)_{(q,0)}\ =\ \frac{g(H)_{(q,0)}}{1+\varepsilon(s-\Omega_s(q,0))}\ =\ \frac{\bar g_q}{s\big(1+\varepsilon s(1-\cos\theta(q))\big)}\,.
\]
Since $g(H_s)_{(q,0)}$ is positive definite, the claim follows. We now make a second claim: if $k>0$ is small enough, then 
\[
\big\{H_s=k\big\}\ =\ \big\{ H_{s,\varepsilon k}=s(1+\varepsilon k)+k\big\}\,.
\]
Since $s(1+\varepsilon k)+k>s(1+\varepsilon k)$, if the claim is true, we can apply Corollary \ref{cor:iso1} with $\alpha=\varepsilon k$, and conclude that the hypersurface $\{H_s=k\}$ is contactomorphic to $E_{\varepsilon k}$, as we wanted to show. So let us prove the claim. First, observe that $\big\{H_s=k\big\}$ is a fibrewise convex hypersurface containing the zero section in its interior since $H_s\in\W(S^2)$. Then, by rearranging terms we see that if an element $(q,p)\in T^*S^2$ satisfies $H_s(q,p)=k$, we have
\[
k(1+\varepsilon s)+s\ =\ R_s(q,p)+\varepsilon k\Omega_s(q,p)\ =\ H_{s,\varepsilon k}(q,p)\,.
\]
Thus, there holds $\{H_s=k\}\subseteq\{ H_{s,\varepsilon k}=s(1+\varepsilon k)+k\}$. However, since $s(1+\varepsilon k)+k>s(1+\varepsilon k)$ also the set $\{ H_{s,\varepsilon k}=s(1+\varepsilon k)+k\}$ is fibrewise convex and contains the zero section in the interior. Thus, the two level sets must be equal and the claim is proved.
\end{proof}

We now move to construct the sequence of magnetic systems contained in Theorem \ref{thm:main}. The first task is to find a Finsler norm $F=F_{s,\alpha,k}$ and a $1$-form $\eta=\eta_{s,\alpha,k}$ on $S^2$, depending on $s\geq0$, $\alpha<1$ and $k>0$, such that
\begin{equation}\label{eq:fin}
\big\{\,(q,p)\ \big|\ H_{s,\alpha}(q,p-\eta_q)\,=\,c\,\big\}\ =\ \big\{\,(q,p)\ \big|\ F_q(p)\,=\,1\,\}\,,
\end{equation}
where $c=c_{s,\alpha,k}:=\sqrt{2k+s^2}+\alpha s>s(1+\alpha)$. We aim to choose $\eta$ in such a way that $F$ is the norm of a Riemannian metric. First, we see that \eqref{eq:fin} is equivalent to asking that for any $(q,p)$ in the complement of the zero section
\begin{equation}\label{eq:lev}
H_{s,\alpha}\left(q,\frac{p}{F_q(p)}-\eta_q\right)\ =\ c\,.
\end{equation}
We expand this expression (dropping the explicit dependence on $q$ from the notation) and we get
\begin{equation}\label{eq:root}
\sqrt{|p|^2-2\bar{g}(p,\eta)F(p)+(|\eta|^2+s^2)F(p)^2}\ =\ \big(c+\alpha\eta(\partial_\varphi)-\alpha sh\big)F(p) - \alpha p(\partial_\varphi)\,.
\end{equation}
We now square both sides and we arrive at the quadratic equation
\begin{equation}\label{eq:quad}
y_2F(p)^2\,+\,2y_1F(p)\,-\,y_0\ =\ 0\,.
\end{equation}
where the coefficients are given by
\begin{equation*}
\left\{\begin{aligned}
y_0\ &=\ |p|^2\,-\,\alpha^2\bar{g}(p,\beta)^2\,,\\
y_1\ &=\ \bar{g}(p,\eta)\,-\,\alpha\big(c+\alpha\eta(\partial_\varphi)-\alpha sh\big)\,\bar{g}(p,\beta)\,,\\
y_2\ &=\ \big(c+\alpha\eta(\partial_\varphi)-\alpha sh\big)^2\,-\,|\eta|^2\,-\,s^2\,.
\end{aligned}\right.
\end{equation*}
Imposing $y_1=0$, we obtain the equation for $\eta$
\begin{equation*}
\bar{g}(p,\eta)\ =\ \alpha\big(c\,+\,\alpha\eta(\partial_\varphi)\,-\,\alpha sh\big)	\,\bar{g}(p,\beta)\,.
\end{equation*}
Since this has to hold for every $(q,p)$ with $p\neq0$, we find that $\eta=\alpha r\beta$, where $r=r_{s,\alpha,k}:S^2\rightarrow\R$ is given by
\begin{equation}
r\,:=\ \frac{c-\alpha sh}{1-\alpha^2|\partial_\varphi|^2}\,.
\end{equation}
Solving now \eqref{eq:quad} with the coefficients given by the $1$-form $\eta$ that we have just found, we see that $y_2>0$ and we get the Finsler norm
\begin{equation}\label{eq:norm}
F(p)^2\ =\ \frac{1}{(1-\alpha^2|\partial_\varphi|^2)r^2-s^2}\,\Big(|p|^2\,-\,\alpha^2p(\partial_\varphi)^2\Big)\,.
\end{equation}
This Finsler norm satisfies \eqref{eq:lev} since the right-hand side of \eqref{eq:root} is positive with our choice of $\eta$ and $F$. For a proof of the positivity of $y_2$ and of the right-hand side of \eqref{eq:root} and for the exact computations leading to $F$ we refer the reader to the Appendix. If $g=g_{s,\alpha,k}$ is the Riemannian metric associated to $\sqrt{2k}F$, we have that
\begin{equation}\label{eq:met}
g(p_1,p_2)\,:=\ \frac{2k}{(1-\alpha^2|\partial_\varphi|^2)r^2-s^2}\Big(\bar{g}(p_1,p_2)\,-\,\alpha^2p_1(\partial_\varphi)p_2(\partial_\varphi)\Big)\,
\end{equation}
and
\begin{equation*}
\big\{\,(q,p)\ \big|\ H_{s,\alpha}\big(q,p-\alpha r(q)\beta_q\big)\,=\,c\,\big\}\ =\ \big\{\,(q,p)\ \big|\ H_{g}(q,p)\,=\,k\,\big\}\,,
\end{equation*}
where $H_g$ is the kinetic energy associated to $g$. Using the diffeomorphism $(q,p)\mapsto (q,p-\alpha r(q)\beta_q)$ we can pull back the Hamiltonian flow of $H_{s,\alpha}$ at level $c$ to a genuine magnetic flow with respect to a shifted magnetic form.
\begin{prp}\label{prp:iso2}
For every $s\geq 0$, $\alpha<1$ and $k>0$, the magnetic flow of the pair $\big(g_{s,\alpha,k},s\bar{\mu}+d(\alpha r_{s,\alpha,k}\beta)\big)$ at level $k$ is conjugated up to time reparametrisation to the Hamiltonian flow of $H_{s,\alpha}$ at level $c_{s,\alpha,k}$.
\end{prp}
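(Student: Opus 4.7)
The plan is to exhibit a fibrewise translation $\phi\colon T^*S^2 \to T^*S^2$ that pulls $\omega_{s\bar{\mu}}$ back to $\omega_{s\bar{\mu} + d(\alpha r_{s,\alpha,k}\beta)}$ and at the same time carries $H_g^{-1}(k)$ bijectively onto $H_{s,\alpha}^{-1}(c_{s,\alpha,k})$. With such a $\phi$ at hand, the proposition is immediate from the characteristic-foliation viewpoint on Hamiltonian flows.

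Concretely, set $\eta := \alpha r_{s,\alpha,k}\beta$ and define $\phi(q,p) := (q, p - \eta_q)$. Since $\pi \circ \phi = \pi$ and the Liouville form satisfies $\lambda_{(q,p)} = p \circ d_{(q,p)}\pi$, a direct computation (in cotangent coordinates, or intrinsically from the fact that the fibrewise shift acts only on the covector part) yields $\phi^*\lambda = \lambda - \pi^*\eta$, and consequently
\[
\phi^*\omega_{s\bar{\mu}} \;=\; d\lambda \,-\, \pi^* d\eta \,-\, \pi^*(s\bar{\mu}) \;=\; \omega_{s\bar{\mu} + d\eta}.
\]
Hence $\phi$ is a symplectomorphism from $(T^*S^2, \omega_{s\bar{\mu} + d(\alpha r_{s,\alpha,k}\beta)})$ to $(T^*S^2, \omega_{s\bar{\mu}})$. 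Moreover, the level-set identity displayed just before the statement,
\[
\bigl\{(q,p)\,\bigm|\, H_{s,\alpha}(q, p - \alpha r(q)\beta_q)=c\bigr\} \;=\; \bigl\{(q,p)\,\bigm|\, H_g(q,p)=k\bigr\},
\]
is nothing other than $\phi\bigl(H_g^{-1}(k)\bigr) = H_{s,\alpha}^{-1}(c_{s,\alpha,k})$.

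To finish, observe that both hypersurfaces are regular: $H_g$ is fibrewise strictly convex and positive off the zero section, while the choice $c = \sqrt{2k+s^2}+\alpha s > s(1+\alpha)$ together with Lemma \ref{lem:has} ensures that $H_{s,\alpha}^{-1}(c)$ lies in the smooth region $\{|p|>0\}$ and is fibrewise strictly convex. On a regular level set, the Hamiltonian vector field spans the kernel of the restricted symplectic form. Since $\phi$ is a symplectomorphism mapping $H_g^{-1}(k)$ onto $H_{s,\alpha}^{-1}(c_{s,\alpha,k})$, it intertwines these characteristic line fields, which is exactly the statement that the two flows are conjugate up to a time reparametrisation. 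There is no real obstacle in this last step: all the substantive work — the explicit determination of $\eta = \alpha r_{s,\alpha,k}\beta$ and the metric $g_{s,\alpha,k}$ that force the two level sets to coincide — has already been performed in the computations preceding the statement.
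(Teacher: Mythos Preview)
Your proof is correct and follows exactly the approach indicated in the paper: the paper states the proposition as an immediate consequence of the fibrewise translation $(q,p)\mapsto(q,p-\alpha r(q)\beta_q)$ and the level-set identity preceding it, and you have simply made explicit the standard facts (that this translation pulls back $\omega_{s\bar\mu}$ to $\omega_{s\bar\mu+d\eta}$ and hence intertwines the characteristic line fields on the two regular hypersurfaces).
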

The proposition yields a proof of Theorem \ref{thm:main} by a careful choice of the parameter $\alpha$ and the energy $k$.
\begin{proof}[Proof of Theorem \ref{thm:main}]
Choose an arbitrary sequence of positive numbers $(k_n)$ tending to zero and let $(\alpha_n)$ be a sequence of irrational numbers in $(0,1)$ such that
\begin{equation}\label{eq:inf}
\lim_{n\rightarrow\infty}\frac{\alpha_n}{k_n}\ =\ 0\,.
\end{equation}
We remark that when $s=0$, it would have been enough to assume that $\alpha_n$ is infinitesimal instead of requiring the stronger condition \eqref{eq:inf}. Then we apply Proposition \ref{prp:iso2} and obtain a corresponding sequence of magnetic systems $\big(g_n,s\bar{\mu}+d(\alpha_nr_n\beta)\big)=\big(g_{s,\alpha_n,k_n},s\bar{\mu}+d(\alpha_nr_{s,\alpha_n,k_n}\beta)\big)$. The fact that the energy level $\{H_{g_n}=k_n\}\subset(T^*S^2,\omega_{s\bar{\mu}+d(\alpha_nr_n\beta)})$ is strictly contactomorphic to the ellipsoid $E_{\alpha_n}$ follows by Proposition \ref{prp:iso2} and Corollary \ref{cor:iso1}. Thus, we need only show that $g_n\rightarrow \bar{g}$ and $\alpha_nr_n\beta\rightarrow 0$ in the $C^\infty$-topology. The latter limit is established at once by observing that
\begin{equation}
\lim_{n\rightarrow\infty}r_n\ =\ s\,.
\end{equation}
Let us deal with the former limit. Remembering the expression \eqref{eq:met} for $g_n$, we have only to check that
\begin{equation}
\lim_{n\rightarrow\infty}\frac{(1-\alpha^2_n|\partial_\varphi|^2)r_n^2-s^2}{2k_n}\ =\ 1
\end{equation}
By repeatedly using \eqref{eq:inf}, we see that the limit on the left-hand side is equal to 
\begin{equation*}
\lim_{n\rightarrow\infty}\frac{c^2_n-s^2}{2k_n}\ =\ \lim_{n\rightarrow\infty}\frac{2k_n+s^2+\alpha_n^2+2\alpha_n\sqrt{2k_n+s^2}-s^2}{2k_n}\ =\ 1\,.\qedhere
\end{equation*}
\end{proof}

\section*{Appendix}
In this short appendix we prove that if $\eta=\alpha r\beta$, then $y_2$ and the right-hand side of \eqref{eq:root} are positive and that $F$ has the expression given by \eqref{eq:norm}. We start by observing that from the equation $y_1=0$ we get 
\begin{equation*}
c\ +\ \alpha\eta(\partial_\varphi)\,-\,\alpha sh\ =\ r\,.
\end{equation*}
This yields at once $y_2=(1-\alpha^2|\partial_\varphi|^2)r^2-s^2$ and, hence, formula \eqref{eq:norm}. In order to prove that $y_2>0$, we observe that $c-\alpha sh>s$ and, therefore,
\begin{align*}
y_2\ >\ (1-\alpha^2|\partial_\varphi|^2)\frac{s^2}{(1-\alpha^2|\partial_\varphi|^2)^2}\ -\ s^2\ =\ s^2\left(\frac{1}{1-\alpha^2|\partial_\varphi|^2}\,-\,1\right)\ \geq\ 0\,.
\end{align*}
Finally, to prove that the right-hand side in \eqref{eq:root} is positive, it is enough to show that
\begin{equation}\label{eq:rhs}
r^2F(p)^2\ >\ \alpha^2p(\partial_\varphi)^2\,.
\end{equation}
From the definition of $F(p)$ we have
\begin{equation*}
F(p)^2\ \geq\ \frac{1}{(1-\alpha^2|\partial_\varphi|^2)r^2}\,\Big(|p|^2\,-\,\alpha^2p(\partial_\varphi)^2\Big)\,,
\end{equation*}
Therefore, \eqref{eq:rhs} is implied by
\begin{equation*}
|p|^2\,-\,\alpha^2p(\partial_\varphi)^2\ >\ (1-\alpha^2|\partial_\varphi|^2)\alpha^2p(\partial_\varphi)^2\,.
\end{equation*}
Bringing all terms on the left-hand side and using that $p(\partial_\varphi)\leq|p||\partial_\varphi|$, we see that it is sufficient to show that
\begin{equation*}
|p|^2-(2-\alpha^2|\partial_\varphi|^2)\alpha^2|p|^2|\partial_\varphi|^2>0\,.
\end{equation*}
The left-hand side is equal to $|p|^2(1-\alpha^2|\partial_\varphi|^2)^2$. Since this quantity is positive, the argument is complete.

\bibliographystyle{amsalpha}
\bibliography{2K}
\end{document}